\documentclass{article}
\usepackage[T2A]{fontenc}
\usepackage[cp1251]{inputenc}
\usepackage[english,russian]{babel}
\usepackage[tbtags]{amsmath}
\usepackage{amsfonts,amssymb,mathrsfs,amscd}
\usepackage[hyper]{msb-a}
\JournalName{}
\numberwithin{equation}{section}
\theoremstyle{plain}
\newtheorem{theorem}{Теорема}
\newtheorem{lemma}{Лемма}[section]

\theoremstyle{definition}
\newtheorem{definition}{Определение}
\newtheorem{proof}{Доказательство}
\newtheorem{remark}{Замечание}


\newcommand{\mb}{\phantom{\displaystyle 1_{\displaystyle 1_{\displaystyle 1_{\displaystyle 1}}}}}

\newcommand{\mr}{\phantom{z}}

\begin{document}

\title{Метрическое описание изгибаемых октаэдров}
\author[S.\,N.~Mikhalev]{С.\,Н.~Михалев}
\address{МГУ имени М.\,В.~Ломоносова\\ Российский университет дружбы народов}
\email{mikhalev@bk.ru}

\date{06.11.2019}
\udk{514.113.5}

\maketitle

\begin{fulltext}

\begin{abstract}
Найдено новое описание изгибаемых октаэдров Брикара с использований условий в терминах длин ребер, пригодное для исследования ряда задач метрической геометрии октаэдров, в частности, для поиска доказательства гипотезы И.\,Х.~Сабитова о равенстве нулю всех (кроме старшего) коэффициентов многочлена для объема октаэдра 3-го типа.

Библиография: 11 названий.
\end{abstract}

\begin{keywords}
изгибаемые многогранники, октаэдры Брикара, многочлен для объема, решение многогранников.
\end{keywords}

\markright{Метрическое описание изгибаемых октаэдров}

\section{История вопроса}
\label{sec0}
Одним из центральных объектов исследований в метрической теории многогранников являются изгибаемые многогранники. Первые примеры нетривиально изгибаемых многогранников в $\mathbb{R}^3$ были построены Брикаром в его работе~\cite{bric}, где дается классификация изгибаемых многогранников, комбинаторно эквивалентных правильному октаэдру. Брикар выделил три типа октаэдров, допускающих нетривиальное изгибание, из которых первые два имеют простое описание, а третий тип устроен довольно сложным образом. Простота изучаемого объекта (у октаэдра всего шесть вершин!) в сочетании с нетривиальностью результата (как в части получения, так и в части даже его описания) подталкивала в дальнейшем и других исследователей находить все новые и новые способы "повторения"\ результатов Брикара и их интерпретации: упомянем работы Беннета~\cite{ben}, Лебега~\cite{leb}, Штахеля~\cite{sta} и, совсем свежую работу группы европейских математиков~\cite{avstr}.

Развитие метрической геометрии многогранников получило новый мощный импульс с доказательством в середине 1990-х годов И.Х.~Сабитовым~\cite{sab1,sab2,sab3} гипотезы кузнечных мехов о постоянстве обобщенного объема изгибаемого многогранника. Использованный в этих работах метод (его можно называть методом "геометрии расстояний"), основанный на применении соотношений Кэли-Менгера, был задействован позднее в работах~\cite{sab4,mih2} и потенциально может быть использован при решении широкого круга задач метрической геометрии многогранников. Однако, при этом все метрические характеристики многогранников должны быть заданы в терминах {\it расстояний} (длин ребер и диагоналей).

Известные описания октаэдров Брикара даются в других терминах. Например, сам Брикар дает оперирует {\it углами} --- плоскими углами граней, а также углами, получаемыми в ходе некоторых дополнительных построений. Правда, описание первых двух типов он все же дает и на языке "геометрии расстояний", благо это делается несложно. Однако, в сложном случае октаэдра третьего типа попытка непосредственного перевода соответствующих уравнений на язык "геометрии растояний"\ приводит к комбинации уравнений, совместное использование которых представляется затруднительным (см., например, условие (13) на стр. 19 работы Брикара~\cite{bric}).

В работе Беннета при описании октаэдра третьего типа появляются новые (их не было у Брикара) необходимые условия на длины ребер: три условия нулевой алгебраической суммы ребер каждого из экваторов и четыре специальных условия "пропорциональности"\ ребер вида $a\cdot b\cdot c=p\cdot q\cdot r$. Однако Беннет не замечает, что независимыми из четырех условий пропорциональности являются только три (любое из этих четырех условий является следствием трех других), и, (ошибочно) считая, что число найденных {\it независимых} необходимых условий на двенадцать ребер октаэдра равно семи ("три условия нулевой суммы и четыре условия пропорциональности"), делает вывод о пятипараметричности множества октаэдров третьго типа --- впрочем, это утверждение, тем не менее, все равно оказывается верным, так как длины ребер октаэдра третьего типа подчиняются еще одному уравнению, которое найдено в нашей работе (см. утверждение 4 теоремы~\ref{t5}), и которое не является следствием условий нулевой алгебраической суммы длин ребер экваторов и условий пропорциональности (см. замечание~\ref{indp}).

Не останавливаясь на анализе остальных работ, скажем, что, в целом, с 1897 года так и не появилось работы, в которой бы давалось такое описание изгибаемых октаэдров, которое позволяло бы применять к изучению и описанию их свойств методы "геометрии расстояний". Наша настоящая работа как раз и призвана восполнить этот пробел, открывая, в частности, дорогу, к вычислению для изгибаемых октаэдров всех трех типов многочлена Сабитова для объема. В качестве первого шага в этом направлении мы также вычисляем два из восьми коэффициентов этого унитарного многочлена, имеющего в случае октаэдра степень 8 (оказалось, оба они равны нулю).

Для того, чтобы получить описание изгибаемых октаэдров Брикара на языке "геометрии расстояний", мы, не пользуясь результатами Брикара, находим новое описание всевозможных изгибаемых октаэдров, сразу в удобной метрической записи (теоремы \ref{t2}--\ref{t5}), попутно получив условия приводимости многочлена Кэли-Менгера, представляющие также и самостоятельный интерес (теорема~\ref{t1}).

\section{Обозначения и определения}
\label{sec0}

Рассмотрим метрический симплициальный комплекс $K$, комбинаторно эквивалентный правильному октаэдру, ребрам которого приписаны так как указано на рис.~\ref{oct} строго положительные удовлетворяющие строгим неравенствам треугольника на гранях числа (длины ребер). Под нетривиальным изгибанием понимается непрерывное семейство изометрических реализаций $P_t:K\to \mathbb{R}^3$, такое, что при изменении $t$ непрерывно меняются все три диагонали октаэдра (диагоналями называются отрезки между вершинами, которые не соединены ребрами). Наше определение нетривиального изгибания отличается от классического, когда требуется, чтобы менялась {\it хотя бы одна} диагональ. Мы тем самым заранее исключаем из рассмотрения известные случаи реализации комплекса $K$: (1) в виде дважды покрытого четырехгранного угла и (2) в виде пары смежных по ребру граней, на каждую из которых накладываются три другие грани.

Из известной теоремы Глюка вытекает, что в общем случае любая реализация комплекса $K$ получается неизгибаемой, а изгибаемость возможна лишь при выполнении определенных условий на метрику (длины ребер) комплекса $K$. Мы найдем условия на длины ребер, выполнение которых {\it необходимо} для существования изометрической реализации комплекса $K$ в виде нетривиально изгибаемого октаэдра, и исходя из этих условий выделим классы изгибаемых октаэдров.

В дальнейшем будем, как правило, использовать одинаковые обозначения для метрических комплексов и их реализаций в $\mathbb{R}^3$. Кроме того, примем соглашение, что одна и та же строчная буква обозначает и длину отрезка (число), и сам этот отрезок как геометрический объект. Далее, две или более подряд записанные строчные буквы будут обозначать соответствующий одномерный метрический комплекс (или его изометрический образ). Например, $pcq$ обозначает треугольник $v_1v_2v_3$, а $cfhg$ --- реберный цикл $v_2v_3v_4v_5$.

Обозначим {\it квадраты} длин диагоналей $v_2v_4$, $v_3v_5$, $v_1v_6$ буквами $x$, $y$, $z$, соответственно (снова теми же буквами будем обозначать также и сами диагонали как геометрические объекты). Будем мыслить ребра октаэдра как известные величины (совокупность их длин определяет метрику октаэдра и будет обозначаться $l$), а диагонали $x$, $y$, $z$ --- как неизвестные.

\begin{figure}
    \begin{picture}(200,180)
    \put(-10,0){
        \begin{picture}(200,180)
            {\thicklines
                \put(100,150){\line(1,-1){60}} \put(100,150){\line(1,-4){20}} \multiput(100,150)(-1,-3){20}{\circle*{1}} \put(100,150){\line(-3,-4){60}} \put(40,70){\line(1,0){80}} \multiput(40,70)(4,2){10}{\circle*{1}} \put(120,70){\line(2,1){40}} \multiput(80,90)(4,0){20}{\circle*{1}}
                \put(100,10){\line(-1,1){60}} \multiput(100,10)(-1,4){20}{\circle*{1}} \put(100,10){\line(1,3){20}} \put(100,10){\line(3,4){60}}
            }

            {\thinlines
                \put(96,153){${\scriptstyle v_1}$} \put(33,65){${\scriptstyle v_2}$} \put(121,65){${\scriptstyle v_3}$} \put(158,90){ ${\scriptstyle v_4}$} \put(81,92){ ${\scriptstyle v_5}$} \put(99,4){${\scriptstyle v_6}$}
                \put(100,150){\circle*{2}} \put(40,70){\circle*{2}} \put(120,70){\circle*{2}} \put(160,90){\circle*{2}} \put(80,90){\circle*{2}} \put(100,10){\circle*{2}}

                \put(64,113){$p$} \put(112,113){$q$} \put(131,121){$r$} \put(84,120){$s$}
                \put(75,63){$c$} \put(136,71){$f$} \put(131,91){$h$} \put(62,86){$g$}
                \put(64,33){$b$} \put(102,33){$a$} \put(131,43){$e$} \put(83,43){$d$}


                \put(30,140){$K$}
            }
        \end{picture}
    }
    \put(180,0){
        \begin{picture}(200,180)
            {\thicklines
                \put(100,150){\line(1,-1){60}} \put(100,150){\line(1,-4){20}} \multiput(100,150)(-1,-3){20}{\circle*{1}} \put(100,150){\line(-3,-4){60}} \put(40,70){\line(1,0){80}} \multiput(40,70)(4,2){10}{\circle*{1}} \put(120,70){\line(2,1){40}} \multiput(80,90)(4,0){20}{\circle*{1}}
            }

            {\thinlines

                \put(64,113){$p$} \put(112,113){$q$} \put(131,121){$r$} \put(84,120){$s$}
                \put(75,63){$c$} \put(136,71){$f$} \put(131,91){$h$} \put(62,86){$g$}

                \put(40,70){\line(6,1){120}} \put(120,70){\line(-2,1){40}}
                \put(73,77){${\scriptstyle \mathbf x}$} \put(94,85){${\scriptstyle \mathbf y}$}

                \put(30,140){$K_1$}
            }
        \end{picture}
    }
    \end{picture}
    \caption{}
    \label{oct}
\end{figure}

Цикл из четырех ребер, никакие два из которых не инцидентны одной и той же грани, будем называть {\it экватором} октаэдра. Метрический симплициальный комплекс $K_j\subset K$, являющийся звездой вершины $v_j$ комплекса $K$, будем называть {\it четырехгранным углом с вершиной $v_j$}. Диагональю произвольного комплекса мы называем отрезок, соединяющий вершины, не соединенные ребром. В соответствии с этими определениями у октаэдра три диагонали, три экватора (в каждом две диагонали) и шесть четырехгранных углов (в каждом две диагонали). Каждый четырехгранный угол "опирается"\ на некоторый экватор, являющийся для него краем.

Пусть длины ребер некоторого экватора равны $a$, $b$, $c$, $d$. Будем называть экватор {\it метрически симметричным}, если выполняется хотя бы одно из трех условий: $a=c$, $b=d$, или $a=b$, $c=d$, или $a=d$, $b=c$. Будем говорить, что экватор {\it имеет нулевую сумму}, если $(a+b-c-d)(a-b+c-d)(a-b-c+d)=0$. Таким образом, метрически симметричный экватор всегда имеет нулевую сумму, обратное, вообще говоря, неверно.

\section{Условия приводимости многочлена Кэли-Менгера}
\label{sec2}

Известно (см.~\cite{berze}), что попарные расстояния $d_{ij}=d_{ji}, i,j=1..5$ между пятью точками в $\mathbb{R}^3$ удовлетворяют следующему условию (это равенство часто называют уравнением Кэли-Менгера, а его левую часть -- определителем или многочленом Кэли-Менгера):
\begin{equation}
\label{km}
\left|
\begin{array}{cccccc}
0& 1& 1& 1& 1& 1\\
1& 0& d_{12}^2& d_{13}^2& d_{14}^2& d_{15}^2\\
1& d_{21}^2& 0& d_{23}^2& d_{24}^2& d_{25}^2\\
1& d_{31}^2& d_{32}^2& 0& d_{34}^2& d_{35}^2\\
1& d_{41}^2& d_{42}^2& d_{43}^2& 0& d_{45}^2\\
1& d_{51}^2& d_{52}^2& d_{53}^2& d_{54}^2& 0
\end{array}
\right|=0
\end{equation}

Напомним, выполнение условия~(\ref{km}) {\it необходимо} для того чтобы данные числа $d_{ij}$ являлись попарными расстояниями для некоторых пяти точек. Достаточным это условие само по себе не является: чтобы гарантировать существование соответствующей пятерки точек нужно наложить на числа $d_{ij}$ некоторые дополнительные условия.

Рассмотрим четырехгранный угол $K_1$ (см. рис.~\ref{oct}). Запишем умноженный на -1 многочлен Кэли-Менгера~(\ref{km}) для расстояний между его вершинами по степеням $x$ и $y$:
\begin{equation}
\label{5versh}
Q(x,y)=x^2y^2-2(s^2+q^2)x^2y-2(p^2+r^2)x y^2+(s^2-q^2)^2x^2+(p^2-r^2)^2y^2+...
\end{equation}
(Иногда, как здесь, мы будем опускать часть слагаемых и заменять их многоточием для удобства восприятия и экономии места.)

Отметим, что в отличие от замкнутых поверхностей, которые почти все являются неизгибаемыми, любой четырехгранный угол (за исключением некоторых очевидных вырожденных случаев) допускает нетривиальное изгибание. Условие Кэли-Менгера можно трактовать как уравнение, описывающее изгибание четырехгранного угла $K_1$, форма которого определяется диагоналями $x$ и $y$. Такая интерпретация наглядно объясняет тот факт, что многочлен Кэли-Менгера имеет вторую степень по каждой из переменных $x$ и $y$: для каждого значения одной из диагоналей существует (в невырожденных случаях) {\it два} возможных значения другой диагонали (пару граней $pqc$ и $psg$ можно отразить относительно плоскости $qsy$, а пару $pqc$ и $qrf$ --- относительно $prx$).

Нас будет интересовать существование нетривиального разложения многочлена Кэли-Менгера на рациональные относительно $x$ и $y$ множители, иными словами, приводимость многочлена в $\mathbb{R}[x,y]$. Такое разложение возможно в случае, когда длины ребер удовлетворяют некоторым соотношениям. Наша ближайшая цель -- найти эти соотношения и соответствующие им разложения многочлена Кэли-Менгера. Стоит отметить, что в общем случае определитель Кэли-Менгера абсолютно неприводим~\cite{andrea}.

Обозначим (см. рис.~\ref{oct}) $\alpha_1$, $\alpha_2$, $\alpha_3$, $\alpha_4$ величины плоских углов $pq$, $qr$, $rs$, $sp$, а $S_1$, $S_2$, $S_3$, $S_4$ --- площади треугольников $pqc$, $qrf$, $rsh$, $psg$. Из выполнения строгих неравенств треугольника на гранях следует, что $0<\alpha_k<\pi$, $S_k>0$, $k=1,2,3,4$.

Разобьем множество всех четырехгранных углов на попарно непересекающиеся классы I, IIx, IIy, III следующим образом:

\begin{definition}
\begin{equation}\label{d2x}
    \begin{array}{lllll}
        K_1\in IIx
        &
        \Leftrightarrow
        &
        \left\{\begin{array}{l}
            \alpha_1=\alpha_4 \\
            \alpha_2=\alpha_3 \\
            \alpha_1\ne \alpha_3
        \end{array}
        \right.
        &
        \mbox{или}
        &
        \left\{\begin{array}{l}
            \alpha_1=\pi-\alpha_4 \\
            \alpha_2=\pi-\alpha_3 \\
            \alpha_1\ne \pi-\alpha_3
        \end{array}
        \right.
    \end{array}
\end{equation}

\begin{equation}\label{d2y}
    \begin{array}{lllll}
        K_1\in IIy
        &
        \Leftrightarrow
        &
        \left\{\begin{array}{l}
            \alpha_1=\alpha_2 \\
            \alpha_3=\alpha_4 \\
            \alpha_1\ne \alpha_3
        \end{array}
        \right.
        &
        \mbox{или}
        &
        \left\{\begin{array}{l}
            \alpha_1=\pi-\alpha_2 \\
            \alpha_3=\pi-\alpha_4 \\
            \alpha_1\ne \pi-\alpha_3
        \end{array}
        \right.
    \end{array}
\end{equation}

\begin{equation}\label{d3}
    \begin{array}{lllll}
        K_1\in III
        &
        \Leftrightarrow
        &
        \left\{\begin{array}{l}
            \alpha_1=\alpha_3 \\
            \alpha_2=\alpha_4
        \end{array}
        \right.
        &
        \mbox{или}
        &
        \left\{\begin{array}{l}
            \alpha_1=\pi-\alpha_3 \\
            \alpha_2=\pi-\alpha_4
        \end{array}
        \right.
    \end{array}
\end{equation}
Во всех остальных случаях говорим, что $K_1\in I$.

Положим $II=IIx\cup IIy$. Если $K_1\in IIx$, будем также говорить, что $K_1$ принадлежит классу $II$ относительно диагонали $x$ (и аналогично для $K_1\in IIy$).
\end{definition}
\begin{remark}\label{equiv}
Для каждого условия на углы из определения 1 есть эквивалентная запись в терминах длин ребер. Например,
\begin{equation*}\label{a14}
    \begin{array}{lll}
        \alpha_1=\alpha_4
        &
        \Leftrightarrow
        &
        s(c^2-q^2-p^2)=q(g^2-p^2-s^2) \\
        \alpha_1=\pi-\alpha_4
        &
        \Leftrightarrow
        &
        s(c^2-q^2-p^2)=-q(g^2-p^2-s^2).

    \end{array}
\end{equation*}
\end{remark}

\begin{lemma}\label{lemmma1}
Если $K_1\in IIx$, то $psS_1=pqS_4$, и $rsS_2=rqS_3$. Если $K_1\in IIy$, то $qrS_1=pqS_2$, и $psS_3=srS_4$. Если $K_1\in III$, то $srS_1=pqS_3$, и $psS_2=rqS_4$.
\end{lemma}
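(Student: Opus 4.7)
The plan is to reduce every equality in the lemma to the elementary sine area formula for a triangle combined with the identity $\sin\alpha=\sin(\pi-\alpha)$. First I would write out the four face areas of $K_1$ as
\[
S_1=\tfrac12\,pq\sin\alpha_1,\qquad S_2=\tfrac12\,qr\sin\alpha_2,\qquad S_3=\tfrac12\,rs\sin\alpha_3,\qquad S_4=\tfrac12\,ps\sin\alpha_4,
\]
which is immediate from the preceding geometric description: by construction $\alpha_i$ is the angle between the two cycle edges bounding the $i$-th face of $K_1$.

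Next I would observe that in each of the three classes the relations \eqref{d2x}--\eqref{d3}, whether of the form $\alpha_i=\alpha_j$ or of the form $\alpha_i=\pi-\alpha_j$, have the same trigonometric consequence $\sin\alpha_i=\sin\alpha_j$. Thus for $K_1\in IIx$ one gets $\sin\alpha_1=\sin\alpha_4$ and $\sin\alpha_2=\sin\alpha_3$; for $K_1\in IIy$, $\sin\alpha_1=\sin\alpha_2$ and $\sin\alpha_3=\sin\alpha_4$; and for $K_1\in III$, $\sin\alpha_1=\sin\alpha_3$ and $\sin\alpha_2=\sin\alpha_4$. Each of the six equalities of the lemma then becomes a one-line computation: for example, in case $IIx$,
\[
psS_1=\tfrac12\,p^2qs\sin\alpha_1=\tfrac12\,p^2qs\sin\alpha_4=pqS_4,
\]
and $rsS_2=rqS_3$ follows in the same way via $\sin\alpha_2=\sin\alpha_3$; the cases $IIy$ and $III$ are the analogous calculations with the paired angles relabelled.

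I do not expect any real obstacle: the whole lemma is a bookkeeping exercise on top of the sine area formula. The only point requiring attention is reading off the correct coupling factors on the left- and right-hand sides from the cyclic labelling $p,q,r,s$ of the boundary of $K_1$: each identity has the shape (factor)$\cdot S_i=$(factor)$\cdot S_j$, where the two factors are precisely those cycle edges that cause both sides to expand into the same monomial in $p,q,r,s$ times the common value of the sine. This is readily verified case by case by inspection of figure~\ref{oct}.
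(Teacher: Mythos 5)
Your proof is correct and follows exactly the route the paper intends: the paper gives no written proof of this lemma beyond the parenthetical hint that $K_1\in IIx$ implies $\sin\alpha_1=\sin\alpha_4$ and $\sin\alpha_2=\sin\alpha_3$, and your argument is precisely that observation combined with the sine area formula $S_1=\tfrac12 pq\sin\alpha_1$, etc. Nothing further is needed.
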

(Для доказательства достаточно заметить, что, например, из $K_1\in IIx$ вытекает, что $\sin\alpha_1=\sin\alpha_4$, и $\sin\alpha_2=\sin\alpha_3$.)

\begin{theorem} \label{t1}
Пусть четырехгранный угол $K_1$ нетривиально изгибаем. Тогда справедливо в точности одно из утверждений:

1) $K_1\in I$, и многочлен $Q(x,y)$ неприводим.

2) $K_1\in IIx$, многочлен $Q(x,y)$ приводим, и если
\begin{equation}\label{cf2x}
    c^2=q^2+p^2\pm\frac{q}{s}(g^2-p^2-s^2), \mr f^2=q^2+r^2\pm\frac{q}{s}(h^2-r^2-s^2),
\end{equation}
(здесь и далее знаки в формулах выбираются одинаковые), то разложение $Q(x,y)$ на неприводимые множители имеет (с соответствующим выбором знаков) вид:
\begin{equation}\label{q2x}
    Q(x,y)=\left[x^2y-(s\pm q)^2x^2-2(p^2+r^2)xy+(p^2-r^2)^2y+...\right]\cdot\left[y-(q\mp s)^2\right].
\end{equation}

3) $K_1\in IIy$, многочлен $Q(x,y)$ приводим, и если
\begin{equation}\label{cf2y}
    f^2=r^2+q^2\pm\frac{r}{p}(c^2-q^2-p^2), \mr h^2=r^2+s^2\pm\frac{r}{p}(g^2-p^2-s^2),
\end{equation}
то разложение $Q(x,y)$ на неприводимые множители имеет вид:
\begin{equation}\label{q2y}
    Q(x,y)=\left[xy^2-(p\pm r)^2y^2-2(s^2+q^2)xy+(p^2-r^2)^2x+...\right]\cdot\left[x-(p\mp r)^2\right].
\end{equation}

4) $K_1\in III$, многочлен $Q(x,y)$ приводим, и если
\begin{equation}\label{cf3}
    c^2=q^2+p^2\pm\frac{pq}{sr}(h^2-r^2-s^2), \mr f^2=q^2+r^2\pm \frac{qr}{ps}(g^2-p^2-s^2), \mbox{то}
\end{equation}
\begin{equation}\label{q3}
    Q(x,y)=\left[xy-(s\pm q)^2x-(p+r)^2y+...\right]\cdot\left[xy-(s\mp q)^2x-(p-r)^2y+...\right].
\end{equation}
\end{theorem}
\begin{remark}
Формулировка теоремы корректна, так как из четырех утверждений никакие два не могут быть справедливыми одновременно.
\end{remark}
\begin{remark}
В~(\ref{q3}) неприводимость сомножителей не гарантируется.
\end{remark}
\begin{proof}
Простая проверка показывает, что подстановка~(\ref{cf2x}) в~(\ref{5versh}) дает~(\ref{q2x}). Аналогично, подстановка~(\ref{cf2y}) в~(\ref{5versh}) дает~(\ref{q2y}), а подстановка~(\ref{cf3}) в~(\ref{5versh}) дает~(\ref{q3}).

Далее, дискриминант $\Delta(x)$ квадратичного многочлена~(\ref{5versh}) относительно переменной $y$ имеет вид
\begin{equation}\label{deltax}
    \Delta(x) = 16 P_1(x)P_2(x),
\end{equation}
где
$$
\begin{array}{l}
P_1(x)=s^2x^2+(-g^2r^2+s^4-s^2h^2-p^2s^2+p^2r^2-p^2h^2+g^2h^2-g^2s^2-s^2r^2)x+\\
+p^4h^2-p^2h^2g^2+r^2h^2s^2-p^2h^2s^2+r^4g^2-p^2r^2h^2+p^2g^2s^2+p^2h^4-\\
-p^2g^2r^2+r^2g^4-r^2g^2h^2-r^2g^2s^2, \mb \\
P_2(x)=q^2x^2+(-r^2q^2-q^2f^2-r^2c^2-p^2f^2+c^2f^2+q^4-c^2q^2-q^2p^2+p^2r^2)x+\\
+r^4c^2-r^2c^2p^2-f^2r^2c^2-p^2f^2c^2-p^2f^2r^2-c^2q^2r^2-p^2f^2q^2+r^2c^4+q^2f^2r^2+\\
+p^4f^2+p^2f^4+q^2c^2p^2.
\end{array}
$$

Дискриминант многочлена $P_1(x)$ равен $S_3^2 S_4^2>0$, а дискриминант многочлена $P_2(x)$ равен $S_1^2 S_2^2>0$. Старшие коэффициенты $P_1(x)$ и $P_2(x)$ положительны. Поэтому $\Delta(x)$ является полным квадратом тогда и только тогда, когда коэффициенты $P_1(x)$ и $P_2(x)$ пропорциональны, что эквивалентно тождественному (при всех $x$) выполнению равенства $q^2P_1(x)-s^2P_2(x)=0$ или тому, что \begin{equation}
\label{linsys}
\left\{
\begin{array}{l}
(s^2q^2-f^2s^2+s^2r^2)c^2-q^2p^2h^2-p^2r^2s^2-q^2g^2s^2+q^2p^2r^2+f^2q^2s^2-\\
-q^2h^2s^2-q^2g^2r^2+p^2f^2s^2+q^2h^2g^2+q^2s^4-s^2q^4=0 \mb\\
r^2s^2c^4+(-p^2f^2s^2-p^2r^2s^2-s^2q^2r^2-f^2s^2r^2+s^2q^2p^2+r^4s^2)c^2+\\
+r^2q^2p^2h^2+r^2q^2g^2s^2-q^2p^2g^2s^2+r^2q^2g^2p^2+g^2q^2p^2h^2+\\
+s^2q^2p^2h^2-r^2q^2g^4-p^2f^2s^2r^2+q^2h^2g^2r^2-r^4q^2g^2-q^2p^4h^2+\\
+p^4f^2s^2-q^2h^2s^2r^2+s^2r^2q^2f^2-s^2p^2f^2q^2-q^2p^2h^4+p^2f^4s^2=0
\end{array}
\right.
\end{equation}

Решим систему~(\ref{linsys}) относительно $c^2$ и $f^2$. Легко проверить, что система имеет в точности четыре решения: это пары $(c^2,f^2)$ из~(\ref{cf2x}) и~(\ref{cf3}).

Рассуждая аналогично, получим, что $\Delta(y)$ является полным квадратом тогда и только тогда, когда выполняются условия одной из четырех пар~(\ref{cf2y}) и~(\ref{cf3}).

Пусть дано нетривиальное разложение $Q(x,y)= Q_1(x,y)\cdot Q_2(x,y)$. Так как старший коэффициент в $Q(x,y)$ не равен нулю, обязательно реализуется один из случаев: \\
(А) один из множителей --- квадратный трехчлен относительно $x$, а другой --- квадратный трехчлен относительно $y$; \\
(Б) степени $Q_1$ и $Q_2$ относительно $y$ равны 1 $\Leftrightarrow$ корни $Q(x,y)$ как квадратного трехчлена относительно $y$ рационально выражаются через $x$ $\Leftrightarrow$ дискриминант $\Delta(x)$ является полным квадратом; \\
(В) степени $Q_1$ и $Q_2$ относительно $x$ равны 1 $\Leftrightarrow$ корни $Q(x,y)$ как квадратного трехчлена относительно $x$ рационально выражаются через $y$ $\Leftrightarrow$ дискриминант $\Delta(y)$ является полным квадратом.

Если не выполнено ни условие (Б), ни условие (В), тогда выполнено условие (А). Но тогда длины ребер таковы, что нетривиальное изгибание невозможно (каждая диагональ может принимать не более двух фиксированных значений), что противоречит условию теоремы.

Пусть выполнено условие (Б), но не выполнено условие (В). Это означает, что выполнено одно из условий~(\ref{cf2x}), и не выполнено ни одно из условий~(\ref{cf2y}) и~(\ref{cf3}). Отсюда следует, что $K_1\in IIx$, и что множитель третьей степени в~(\ref{q2x}) неприводим (приводимость означала бы, что корни $x$ рационально выражаются через $y$).

Аналогично, пусть выполнено условие (В), но не выполнено условие (Б). Это означает, что выполнено одно из условий~(\ref{cf2y}), и не выполнено ни одно из условий~(\ref{cf2x}) и~(\ref{cf3}). Отсюда следует, что $K_1\in IIy$, и что множитель третьей степени в~(\ref{q2y}) неприводим.

Наконец, пусть выполнено и условие (Б), и условие (В). Такое может произойти в двух случаях: (1) выполнено одно из условий~(\ref{cf3}), тогда $K_1\in III$, и выполняется~(\ref{q3}); (2) выполнено одно из условий~(\ref{cf2x}) и одно из условий~(\ref{cf2y}), откуда вытекает, что выполняется и~(\ref{cf3}), а значит снова верно~(\ref{q3}), и $K_1\in III$.

Итак, мы доказали, что приводимость $Q(x,y)$ эквивалентна выполнению одного из утверждений 2, 3, 4. Делаем отсюда вывод, что класс I состоит из тех и только тех четырехгранных углов, для которых $Q(x,y)$ неприводим, что и завершает доказательство теоремы.

\end{proof}

\begin{remark}
В качестве неизвестных, относительно которых решалась система~(\ref{linsys}), были выбраны переменные $c$ и $f$. Если выбрать переменные $g$ и $h$ (соответствующие ребрам, лежащим по другую сторону от диагонали $x$), приходим к тем же самым линейным соотношениям~(\ref{cf2x}) между $c^2$ и $g^2$ и между $f^2$ и $h^2$, отличие лишь в том, что разрешены они будут относительно $g^2$ и $h^2$.
\end{remark}

\begin{remark}
Каждому из двух множителей в разложении $Q(x,y)$ отвечает свое непрерывное семейство пространственных положений четырехгранного угла $K_1$. Например, это может быть выпуклый четырехгранный угол (изгибание описывается одним из множителей), и самопересекающийся четырехгранный угол, полученный из исходного путем отражения двух граней относительно "диагональной"\ плоскости (изгибание описывается вторым множителем). Линейному множителю при этом соответствует тривиальное изгибание. Так, в~(\ref{q2x}) один из множителей имеет вид $y-y_0$. Длины ребер здесь таковы, что отраженные грани накладываются на две другие грани, а одна из вершин экватора попадает на противолежащее ей ребро (это становится возможным в силу равенства плоских углов прилежащих граней). Такая фигура с самоналожениями изгибается тривиальным образом: в ходе изгибания диагональ $y$ не меняется. Поэтому нас будет интересовать только положение, соответствующее множителю третьей степени. В таких случаях для единообразия формулировок мы иногда будем вместо общего многочлена Кэли-Менгера сразу рассматривать его сомножитель третьей степени.
\end{remark}

\begin{remark} \label{privodimost}
Во всех случаях из неприводимости многочлена в $\mathbb{R}[x,y]$ следует его неприводимость в $\mathbb{C}[x,y]$. Это вытекает из представления дискриминанта $\Delta(x)$ в виде~(\ref{deltax}) и аналогичного представления для $\Delta(y)$: если бы, например, $Q(x,y)$ был неприводим над полем $\mathbb{R}$, но приводим над $\mathbb{C}$, это означало бы, что или $\Delta(x)=-P^2(x)$, где $P(x)\in\mathbb{R}[x]$, что невозможно, так как коэффициент при $x$ в старшей степени в $\Delta(x)$ равен $q^2s^2>0$, или $\Delta(y)=-\widetilde{P}^2(y)$, что, аналогично, невозможно.
\end{remark}

\section{Условия изгибаемости октаэдра}
\label{sec3}
В дальнейшем предполагаем, что нам дан комплекс $K$ (рис.~\ref{oct}), изометрически реализуемый в $\mathbb{R}^3$ в виде изгибаемого октаэдра, который мы также будем обозначать $K$.

Рассмотрим один из экваторов октаэдра $K$, например, $cfhg$. Он разбивает $K$ на два четырехгранных угла: $K_1$ и $K_6$ (такие четырехгранные углы, пятерки точек и их многочлены Кэли-Менгера будем называть {\it соответствующими} данному экватору). Каждое из соответствующих экватору $cfhg$ условий Кэли-Менгера задает некоторое алгебраическое многообразие в плоскости переменных $x,y$. Из нетривиальной изгибаемости октаэдра вытекает, что эти два многообразия должны пересекаться по нетривиальному (не сводящемуся к дискретному набору точек) алгебраическому многообразию.

Так как для любых $f,g\in\mathbb{C}[x,y]$, таких что $g$ не делится на $f$, из неприводимости $f$ следует конечность множества общих нулей $f$ и $g$ (это простой алгебраический факт), приходим к выводу, что либо оба уравнения Кэли-Менгера неприводимы (и тогда их коэффициенты должны быть попарно равны), либо оба они приводимы (и тогда, аналогично, должны быть равны степени и пропорциональны коэффициенты некоторой пары неприводимых сомножителей). Отметим также, что указанные сомножители должны иметь степень 2 или 3 по совокупности переменных $x$ и $y$ (линейному множителю соответствует лишь тривиальное изгибание). Аналогичные рассуждения справедливы и для остальных двух экваторов.

Получаем, что для каждого экватора соответствующие ему четырехгранные углы принадлежат одному и тому же классу (I, IIx, IIy или III).

Классом экватора будем называть класс соответствующих ему четырехгранных углов. Каждый экватор относится к одному из классов (I, IIx, IIy или III) независимо от других. Классификацию изгибаемых октаэдров будем строить исходя из полного перебора возможных случаев, который дается набором теорем 2--5.

\begin{theorem} \label{t2}
Если хотя бы два экватора принадлежат классу I, то все три экватора метрически симметричны.
\end{theorem}

\begin{proof}
Без ограничения общности считаем, что $cfhg,bpre\in I$.

Рассмотрим экватор $cfhg$. Соответствующие ему многочлены Кэли-Менгера (для четырехгранных углов $K_1$ и $K_6$) имеют вид:
$$Q_1(x,y)=x^2y^2-2(s^2+q^2)x^2y-2(p^2+r^2)x y^2+(s^2-q^2)^2x^2+(p^2-r^2)^2y^2+...,$$
$$Q_6(x,y)=x^2y^2-2(a^2+d^2)x^2y-2(b^2+e^2)x y^2+(a^2-d^2)^2x^2+(b^2-e^2)^2y^2+...,$$
По предположению многочлены $Q_1$ и $Q_6$ неприводимы, следовательно, должны иметь пропорциональные коэффициенты. Получаем четыре условия, которые удобно записать в виде двух систем уравнений:

\begin{equation*}
    \begin{array}{lll}
        \left\{
        \begin{array}{l}
            s^2+q^2=a^2+d^2\\
            (s^2-q^2)^2=(a^2-d^2)^2
        \end{array}
        \right.
        &
        \left\{
        \begin{array}{l}
            p^2+r^2=b^2+e^2\\
            (p^2-r^2)^2=(b^2-e^2)^2.
        \end{array}
        \right.
    \end{array}
\end{equation*}

Исследуя эти системы, убеждаемся что каждый из экваторов $sqad$ и $bpre$ метрически симметричен.

Повторяя это рассуждение для экватора $bpre$, получаем, что экватор $cfhg$ также метрически симметричен. Теорема доказана.
\end{proof}

\begin{theorem} \label{t3}
Если хотя бы два экватора принадлежат классу $II$, то все три экватора метрически симметричны.
\end{theorem}

\begin{proof}
Без ограничения общности считаем, что $cfhg,bpre\in II$.

Необходимо рассмотреть два случая (в остальных случаях рассуждения и формулы идентичны с точностью до переобозначения переменных).

{\bf Случай (А).} $cfhg,bpre\in IIx$. Запишем многочлены Кэли-Менгера для четырехгранных углов $K_1$, $K_6$, $K_3$, $K_5$:
$$Q_1(x,y)=x^2y-(s\pm q)^2x^2-2(p^2+r^2)xy+(p^2-r^2)^2y+...$$
$$Q_6(x,y)=x^2y-(a\pm d)^2x^2-2(b^2+e^2)xy+(b^2-e^2)^2y+...$$
$$Q_3(x,z)=x^2z-(s\pm d)^2x^2-2(h^2+g^2)xz+(h^2-g^2)^2z+...$$
$$Q_5(x,z)=x^2z-(a\pm q)^2x^2-2(c^2+f^2)xz+(c^2-f^2)^2z+...$$
(Знаки выбираются независимо, всего получается 16 вариантов.)

Приравниваем коэффициенты при $x^2$, $xy$ и $y$ первой пары уравнений и при $x^2$, $xz$, $z$ второй пары уравнений --- получаем шесть условий, которые удобно записать в виде трех систем уравнений

\begin{equation}\label{sys3}
    \begin{array}{lll}
        \left\{
        \begin{array}{l}
            (s\pm q)^2 = (a\pm d)^2\\
            (s\pm d)^2=(a\pm q)^2
        \end{array}
        \right.
        &
        \left\{
        \begin{array}{l}
            p^2+r^2=b^2+e^2\\
            (p^2-r^2)^2=(b^2-e^2)^2
        \end{array}
        \right.
        &
        \left\{
        \begin{array}{l}
            c^2+f^2=h^2+g^2\\
            (c^2-f^2)^2=(h^2-g^2)^2
        \end{array}
        \right.
    \end{array}
\end{equation}

Исследуя системы для $bpre$ и $cfhg$ в~(\ref{sys3}), установим, что экваторы $bpre$ и $cfhg$ метрически симметричны. Осталось проверить, что экватор $sqad$ также метрически симметричен. Отметим, что без дополнительных условий из~(\ref{sys3}) следует, вообще говоря, лишь, что экватор $sqad$ имеет нулевую сумму, но не его метрическая симметричность.

Большинство вариантов расстановки знаков в~(\ref{sys3}) приводит к вырожденныму случаям (например, из $s+q+a+d=0$ следует равенство ребер нулю, а $s-q-a-d=0$ соответствует тривиальному изгибанию). Допустимыми являются лишь четыре варианта расстановки знаков. Для каждого из этих вариантов в дополнение к условиям~(\ref{sys3}) найдем одно дополнительное условие на длины ребер $sqad$. Вспомним, что каждому из четырех многочленов Кэли-Менгера соответствует пара условий на длины ребер четырехгранного угла, таким образом, должны выполняться одновременно восемь условий. Для каждой такой системы из восьми уравнений укажем способ получения уравнения-следствия в такой форме, чтобы его было удобно скомбинировать с системой для $sqad$ в~(\ref{sys3}).

Приведем выкладки только для одного из четырех случаев. Пусть, например,
$$Q_1(x,y)=x^2y-(s-q)^2x^2-2(p^2+r^2)xy+(p^2-r^2)^2y+...$$
$$Q_6(x,y)=x^2y-(a-d)^2x^2-2(b^2+e^2)xy+(b^2-e^2)^2y+...$$
$$Q_3(x,z)=x^2z-(s+d)^2x^2-2(h^2+g^2)xz+(h^2-g^2)^2z+...$$
$$Q_5(x,z)=x^2z-(a+q)^2x^2-2(c^2+f^2)xz+(c^2-f^2)^2z+...$$

Это соответствует следующим условиям на ребра:
$$c^2=q^2+p^2-\frac{q}{s}(g^2-p^2-s^2), \mr f^2=q^2+r^2-\frac{q}{s}(h^2-r^2-s^2),$$
$$c^2=a^2+b^2-\frac{a}{d}(g^2-b^2-d^2), \mr f^2=a^2+e^2-\frac{a}{d}(h^2-e^2-d^2),$$
$$b^2=a^2+c^2+\frac{a}{q}(p^2-q^2-c^2), \mr e^2=a^2+f^2+\frac{a}{q}(r^2-q^2-f^2),$$
$$b^2=d^2+g^2+\frac{d}{s}(p^2-s^2-g^2), \mr e^2=d^2+h^2+\frac{d}{s}(r^2-s^2-h^2).$$

Сложим уравнения левого столбца (формулы для $c^2$ и $b^2$), предварительно умножив их, соответственно, на $-asd$, $qsd$, $qsd$, $asq$. После преобразований получим:
\begin{equation}\label{sqad1}
    2sqad(s+q-a-d)=0 \Rightarrow s+q-a-d=0.
\end{equation}

Легко проверить что при выполнении условия~(\ref{sqad1}) системе для $sqad$ в~(\ref{sys3}) удовлетворяют только метрически симметричные экваторы. Случай (А) завершен.

{\bf Случай (Б).} $cfhg\in IIx$, и $bpre\in IIz$. Рассуждение в этом случае сходно с рассуждением в случае (А).

Запишем многочлены Кэли-Менгера для четырехгранных углов $K_1$, $K_6$, $K_3$, $K_5$:
$$Q_1(x,y)=x^2y-(s\pm q)^2x^2-2(p^2+r^2)xy+(p^2-r^2)^2y+...$$
$$Q_6(x,y)=x^2y-(a\pm d)^2x^2-2(b^2+e^2)xy+(b^2-e^2)^2y+...$$
$$Q_3(x,z)=z^2x-(h\pm g)^2z^2-2(s^2+d^2)zx+(s^2-d^2)^2x+...$$
$$Q_5(x,z)=z^2x-(c\pm f)^2z^2-2(a^2+q^2)zx+(a^2-q^2)^2x+...$$

В каждом из 16 вариантов расстановки знаков приравниваем коэффициенты при $x^2$, $xy$ и $y$ первой пары уравнений и при $z^2$, $zx$, $x$ второй пары уравнений --- получаем шесть условий, которые удобно записать в виде трех систем уравнений:

\begin{equation}\label{sys4}
    \begin{array}{lll}
        \left\{
        \begin{array}{l}
            (s\pm q)^2 = (a\pm d)^2\\
            (h\pm g)^2=(c\pm f)^2
        \end{array}
        \right.
        &
        \left\{
        \begin{array}{l}
            p^2+r^2=b^2+e^2\\
            (p^2-r^2)^2=(b^2-e^2)^2,
        \end{array}
        \right.
        &
        \left\{
        \begin{array}{l}
            s^2+d^2=a^2+q^2\\
            (s^2-d^2)^2=(a^2-q^2)^2.
        \end{array}
        \right.
    \end{array}
\end{equation}

Исследуя системы для $bpre$ и $sqad$ в~(\ref{sys4}), установим, что экваторы $bpre$ и $sqad$ метрически симметричны. Осталось проверить, что экватор $cfhg$ также метрически симметричен.

В большинстве случаев сочетание знаков таково, что решение соответствует лишь вырожденному или тривиально изгибающемуся октаэдру. Для всех оставшихся допустимых случаев (их четыре) получим теперь еще одно общее условие на длины ребер $cfhg$. Метод получения дополнительного условия здесь другой, отличный от примененного в случае (А).

В рассматриваемом случае $K_1, K_6\in IIx$, $K_3, K_5\in IIz$. По лемме 1:
$$\left\{\begin{array}{l}psS_1-pqS_4=0\\ shS_4-gsS_3=0\\ qrS_3-rsS_2=0\\ qcS_2-qfS_1=0\end{array}\right.$$

Перемножая эти уравнения, получаем, что
\begin{equation}\label{cfhg1}
    prs^2q^2(hc-fg)S_1S_2S_3S_4=0 \Leftrightarrow hc=fg.
\end{equation}

Легко проверить что при выполнении условия~(\ref{cfhg1}) уравнению для $cfhg$ первой из систем~(\ref{sys4}) удовлетворяют только метрически симметричные экваторы. Случай (Б) завершен. Теорема доказана.

\begin{theorem} \label{t4}
Если у октаэдра есть по одному экватору каждого из классов I, II и III, то все три экватора метрически симметричны.
\end{theorem}

Без ограничения общности считаем, что $sqad\in I$, $bpre\in II$, $cfhg\in III$. Повторяя рассуждение из доказательства теоремы~\ref{t2} для экватора $sqad$, выписываем системы

\begin{equation}\label{sys5}
    \begin{array}{ll}
        \left\{
        \begin{array}{l}
            c^2+g^2=f^2+h^2\\
            (c^2-g^2)^2=(f^2-h^2)^2,
        \end{array}
        \right.
        &
        \left\{
        \begin{array}{l}
            b^2+p^2=r^2+e^2\\
            (b^2-p^2)^2=(r^2-e^2)^2
        \end{array}
        \right.
    \end{array}
\end{equation}
и получаем, что экваторы $cfhg$ и $bpre$ метрически симметричны. Осталось доказать, что экватор $sqad$ также метрически симметричен.

Необходимо рассмотреть два случая.

{\bf Случай (А).} $bpre\in IIz$. Подобно тому, как это делалось при доказательстве теоремы~\ref{t2}, получаем отсюда, что экватор $sqad$ метрически симметричен.

{\bf Случай (Б).} $bpre\in IIx$. В этом случае $K_3, K_5\in IIx$, $K_1, K_6\in III$. По лемме 1
$$\left\{\begin{array}{l}acS_1-qcS_5=0\\ edS_5-baS_7=0\\ shS_7-hdS_3=0\\ pqS_3-rsS_1=0\end{array}\right.$$
Перемножая эти уравнения, получаем, что
\begin{equation}\label{epbr}
    acdshq(ep-br)S_1S_3S_5S_7=0 \Rightarrow ep=br.
\end{equation}

Ранее мы уже установили, что из~(\ref{sys5}) следует метрическая симметричность экватора $bpre$. Очевидно, что на самом деле справедливо более точное утверждение: из~(\ref{sys5}) следует, что или $b=r, p=e$, или $b=e, p=r$. С учетом условия~(\ref{epbr}) отсюда следует, что в рассматриваемом случае обязательно
\begin{equation}\label{be}
    b=e.
\end{equation}

Запишем теперь многочлены Кэли-Менгера для четырехгранных углов $K_1$, $K_6$, $K_3$, $K_5$:
$$Q_1(x,y)=\left[xy-(s\pm q)^2x-(p+r)^2y+...\right]\cdot\left[xy-(s\mp q)^2x-(p-r)^2y+...\right]$$
$$Q_6(x,y)=\left[xy-(a\pm d)^2x-(b+e)^2y+...\right]\cdot\left[xy-(a\mp d)^2x-(b-e)^2y+...\right]$$
$$Q_3(x,z)=x^2z-(s\pm d)^2x^2-2(h^2+g^2)xz+(h^2-g^2)^2z+...$$
$$Q_5(x,z)=x^2z-(a\pm q)^2x^2-2(c^2+f^2)xz+(c^2-f^2)^2z+...$$

В качестве уравнения изгибания угла $K_1$ ($K_6$) можно взять любой из двух множителей в $Q_1$ ($Q_6$) с любым выбором знака (конечно, если множитель неприводим; если приводимы оба --- изгибание тривиально). С учетом знаков в $Q_3$ и $Q_5$ всего максимально получается 64 варианта, из которых 8 являются допустимыми (они не приводят к заведомо вырожденным случаям). Каждому варианту соответствует набор условий на ребра четырехгранных углов. В каждом из случаев приравнивая коэффициенты при $x$ выбранных множителей в $Q_1$ и $Q_6$, а также при $x^2$ в $Q_3$ и $Q_5$, получим систему уравнений для ребер экватора $sqad$:
\begin{equation}
\begin{array}{l}
\left\{\begin{array}{l}
(s\pm q)^2 = (a\pm d)^2\\
(s\pm d)^2=(a\pm q)^2
\end{array}\right.
\end{array}
\end{equation}
и найдем дополнительное условие на длины ребер $sqad$. Мы ограничимся рассмотрением одного из случаев, в остальных рассуждение аналогичное.

Пусть, например, условия на ребра четырехгранных углов $K_1$, $K_6$, $K_3$, $K_5$ таковы:
\begin{equation}\label{cf4b}
    \begin{array}{ccc}
      c^2=a^2+b^2-\frac{ab}{de}(h^2-d^2-e^2) & \mr & f^2=a^2+e^2-\frac{ae}{bd}(g^2-b^2-d^2) \\
      c^2=q^2+p^2-\frac{pq}{sr}(h^2-r^2-s^2) & \mr & f^2=q^2+r^2-\frac{qr}{ps}(g^2-p^2-s^2) \\
      b^2=a^2+c^2+\frac{a}{q}(p^2-q^2-c^2) & \mr & e^2=a^2+f^2+\frac{a}{q}(r^2-q^2-f^2) \\
      b^2=d^2+g^2+\frac{d}{s}(p^2-s^2-g^2) & \mr & e^2=d^2+h^2+\frac{d}{s}(r^2-s^2-h^2),
    \end{array}
\end{equation}
что соответствет системе
\begin{equation}\label{sqadsys4b}
\begin{array}{lll}
\left\{\begin{array}{l}
(s-q)^2=(a-d)^2 \\
(s+d)^2=(a+q)^2
\end{array}
\right.
&
\Leftrightarrow
&
\left\{\begin{array}{l}
(s-q-a+d)(s-q+a-d)=0\\
(s-q-a+d)(s+q+a+d)=0
\end{array}
\right.
\end{array}
\end{equation}

Рассмотрим многочлены
\begin{equation}\label{alpha}
    \begin{array}{l}
      \alpha_1(l)=dqes(p-r)(s-d)(q-a) \\
      \alpha_2(l)=dqes(q^2r-ps^2-d^2p+2pds+ra^2-2arq) \\
      \alpha_3(l)=seaq(q^2r-rd^2+pds+sdr-ps^2+ra^2-2arq) \\
      \alpha_4(l)=s^2eaq(p-r)(q-a) \\
      \alpha_5(l)=dq^2se(p-r)(s-d) \\
      \alpha_6(l)=qdes(2pds-aqp+q^2r-ps^2-d^2p-arq+a^2p) \\
      \alpha_7(l)=seaq(q^2r-rd^2+pds+sdr-ps^2+ra^2-2arq) \\
      \alpha_8(l)=0
    \end{array}
\end{equation}

Простая проверка показывает, что числитель линейной комбинации уравнений~(\ref{cf4b}) с коэффициентами~(\ref{alpha}) после подстановки~(\ref{be}) и преобразований имеет вид:
\begin{equation}\label{sqad4b}
dqsea(s-q+a-d)(s+q-a-d)^2(r+p)=0 \Leftrightarrow \left[\begin{array}{l} s-q+a-d=0\\ s+q-a-d=0 \end{array}\right.
\end{equation}

Любое из двух уравнений в~(\ref{sqad4b}) в сочетании с системой~(\ref{sqadsys4b}) приводит нас к тому, что экватор $sqad$ метрически симметричен. Случай (Б) завершен. Теорема доказана.
\end{proof}

Введем обозначение: $B_{\varepsilon_1, \varepsilon_2, \varepsilon_3}=abc(h^2+s^2+r^2)-hsr(a^2+b^2+c^2)+
\varepsilon_1\cdot(csb(h^2+r^2-s^2)-hra(b^2+c^2-a^2))+
\varepsilon_2\cdot(ahb(s^2+r^2-h^2)-src(a^2+b^2-c^2))+
\varepsilon_3\cdot(cra(s^2+h^2-r^2)-shb(a^2+c^2-b^2)).$

\begin{theorem} \label{t5}
Если хотя бы два экватора принадлежат классу $III$, то либо все экваторы метрически симметричны, либо справедливы утверждения:

1) третий экватор также принадлежит классу III,

2) $qge=pfd, \mr bhq=cdr, \mr rga=sfb, \mr ahp=ces,$

3) все экваторы имеют нулевую сумму,

4) $B_{1,1,1}\cdot B_{1,-1,-1}\cdot B_{-1,1,-1}\cdot B_{-1,-1,1}=0,$

\end{theorem}
\begin{proof}
Без ограничения общности считаем, что $cfhg, bpre\in III$. Тогда уравнения Кэли-Менгера для $K_1$ и $K_3$ имеют вид:
$$Q_1(x,y)=\left[xy-(s\pm q)^2x-(p+r)^2y+...\right]\cdot\left[xy-(s\mp q)^2x-(p-r)^2y+...\right]$$
$$Q_3(x,z)=\left[xz-(a\pm q)^2x-(c+f)^2z+...\right]\cdot\left[xz-(a\mp q)^2x-(c-f)^2z+...\right]$$

Каждое условие разлагается на два одинаковых по структуре множителя. Изгибание четырехгранного угла в некотором конкретном положении описывается одним из двух (неприводимых) множителей. Таким образом, $y$ дробно-линейно выражается через $x$, а $x$ дробно-линейно выражается через $z$. Тогда $y$ дробно-линейно выражается через $z$, что по теореме~\ref{t1} влечет $sqad\in III$. Утверждение 1 доказано.

По лемме 1
$$\left\{\begin{array}{l}afS_1=cqS_6\\ hrS_6=feS_3 \\  pqS_3=srS_1\end{array}\right.$$
Перемножая эти уравнения, получаем, что
\begin{equation*}
    qrf(ahp-ces)S_1S_3S_6=0 \Rightarrow ahp=ces.
\end{equation*}
Аналогично получаются и остальные равенства утверждения 2. Отметим, что любое из четырех равенств утверждения 2 является следствием трех других.

Выпишем три пары условий Кэли-Менгера (для каждого из экваторов). В каждом условии можно, независимо, выбрать один из (неприводимых) множителей и знак в нем. Количество возможных систем (каждая состоит из трех пар уравнений вида $xy+...=0$), таким образом, не превосходит $16^3=4096$. В каждой такой системе приравняем соответствующие коэффициенты при первых степенях переменных. Получим набор систем уравнений относительно длин ребер. Отбракуем системы, дающие вырожденные решения, а также системы, влекущие метрическую симметричность хотя бы одного экватора. В тех случаях, когда уравнения распадаются в произведение уравнений, перейдем к рассмотрению соответствующих (более простых) систем. В результате придем к набору из 27 систем уравнений вида
\begin{equation}\label{3sys}
\left\{\begin{array}{l}
s \pm q \pm a \pm d=0\\
b \pm p \pm r \pm e=0\\
c \pm f \pm h \pm g=0,
\end{array}
\right.
\end{equation}
где в каждом уравнении независимо от других выбрана комбинация знаков, соответствующая одному из множителей в определении экватора с нулевой суммой (таких множителей три, отсюда и количество систем $3^3=27$). Таким образом, мы будем рассматривать {\it все} возможные случаи взаимного расположения экваторов с нулевой суммой. Поэтому тот факт, что на предыдущем шаге мы исключили из рассмотрения метрически симметричные экваторы, на самом деле не ограничивает общности рассуждения (любой метрически симметричный экватор является экватором с нулевой суммой).

Мы доказали, что утверждения 1, 2 и 3 доказываемой теоремы справедливы. Далее предположим, что не все экваторы метрически симметричны. Докажем в этом предположении справедливость утверждения 4. Как и раньше, рассмотрим только один из случаев (остальные разбираются аналогично).
Пусть, например,
\begin{equation}\label{3sys1}
\left\{\begin{array}{l}
s - q - a + d=0\\
b - p - r + e=0\\
c - f - h + g=0,
\end{array}
\right.
\end{equation}

Если при этом $b=r$ (и, следовательно, $p=e$), легко получить, учитывая утверждение 2 теоремы, что другие два экватора также должны быть метрически симметричными, что противоречит сделанному предположению. Поэтому можно считать, что $b\ne r$ (и $p\ne e$). Аналогично рассуждая, получаем, что и $c\ne h$ ($f\ne g$), и $s\ne a$ ($q\ne d$).

Рассмотрим три уравнения системы~(\ref{3sys1}) совместно с любыми тремя уравнениями утверждения 2. Решим полученную систему (в ней 6 уравнений и 12 неизвестных) относительно $d$, $q$, $f$, $g$, $p$, $e$ (это переменные, не вошедшие в выражение для $B_{\varepsilon_1, \varepsilon_2, \varepsilon_3}$):

\begin{equation}\label{subs}
\begin{array}{ccc}
d = {\displaystyle \frac{bh(a-s)}{bh-cr}} & f = {\displaystyle \frac{ra(c-h)}{ra-sb}} & p = {\displaystyle \frac{-cs(b-r)}{ah-cs}}\\
\mb & \mb & \mb\\
q = {\displaystyle \frac{cr(a-s)}{bh-cr}} & g = {\displaystyle \frac{sb(c-h)}{ra-sb}} & e = {\displaystyle \frac{-ah(b-r)}{ah-cs}}
\end{array}
\end{equation}
(Выражения в знаменателях не равны нулю: например, если бы $bh=cr$, то с учетом $q\ne d$ получили бы $bhq\ne cdr$.)

Вспомним, что в рассматриваемом случае (система~(\ref{3sys1})) мы имеем 6 пар условий на ребра, выражающие принадлежность трех экваторов типу III. Например, для $K_1$ это условия
$$c=q^2+p^2-\frac{pq}{sr}(h^2-r^2-s^2), \mr f=q^2+r^2- \frac{qr}{ps}(g^2-p^2-s^2).$$
С учетом~(\ref{subs}) последние равенства приобретают вид
$$\frac{c^2(bhs+ahr-crs-rsh)\cdot B_{1,1,1}}{(bh-cr)^2(ah-cs)^2}=0 \Leftrightarrow (bhs+ahr-crs-rsh)\cdot B_{1,1,1}=0$$
и
$$\frac{r^2(c-h)(bha+cba-cra-bcs)\cdot B_{1,1,1}}{(b-r)(ah-cs)(bh-cr)^2(ra-sb)}=0 \Leftrightarrow (bha+cba-cra-bcs)\cdot B_{1,1,1}=0$$

Если предположить, что $B_{1,1,1}\ne 0$, то необходимо
\begin{equation*}
\left\{\begin{array}{l}
bhs+ahr-crs-rsh=0\\
bha+cba-cra-bcs=0,
\end{array}
\right.
\end{equation*}
откуда, вычитая одно уравнение из другого получим невозможное равенство
$$(c+h)(b-r)(a-s)=0.$$

Таким образом, мы проверили, что в рассматриваемом случае $B_{1,1,1}=0$. Аналогично рассуждая, можно проверить, что и в остальных случаях один из множителей в утверждении 4 равен нулю. Утверждение 4 тем самым доказано, что и завершает доказательство всей теоремы.
\end{proof}

\begin{remark} \label{indp}
В теореме~\ref{t5} утверждение 4 не вытекает из утверждений 2 и 3: условия утверждений 2 и 3 задают многообразие с параметризацией~(\ref{subs}), где переменные $a,b,c,h,s,r$ свободны, а условие утверждения 4 задает связь между этими параметрами.
\end{remark}

\begin{remark}
Всякий набор значений параметров $a,b,c,h,s,r$, такой, что $B_{\varepsilon_1, \varepsilon_2, \varepsilon_3}=0$ (с некоторым выбором знаков $\varepsilon_k$) определяет, таким образом, метрику октаэдра указанного типа (конечно, все переменные должны уще удовлетворять некоторым условиям типа неравенства).
\end{remark}

\begin{remark}
В качестве $B_{\varepsilon_1, \varepsilon_2, \varepsilon_3}$ можно взять и другой многочлен: аналогичный по структуре, но от других переменных (подойдут ребра любой пары противоположных граней). Формулировка теоремы~\ref{t5} при этом сохраняется дословно, в доказательстве возникнет ровно та же система из шести уравнений, но решать ее нужно будет уже относительно других переменных (тех, что не вошли в выражение $B_{\varepsilon_1, \varepsilon_2, \varepsilon_3}$).
\end{remark}

\section{Классификация изгибаемых октаэдров и новые задачи}
\label{sec4}
\begin{definition}
Будем говорить, что октаэдр имеет:

тип А, если все его экваторы метрически симметричны;

тип Б, если все его экваторы имеют нулевую сумму, и справедливы равенства: $qge=pfd, \mr bhq=cdr, \mr rga=sfb$, $B_{1,1,1}\cdot B_{1,-1,-1}\cdot B_{-1,1,-1}\cdot B_{-1,-1,1}=0.$
\end{definition}

Из теорем~\ref{t2}---\ref{t5} непосредственно следует:
\begin{theorem}\label{t6}
Любой нетривиально изгибаемый октаэдр имеет тип A или Б.
\end{theorem}

Сравним полученный результат с результатом классической работы Брикара~\cite{bric}. Брикар доказал, что любой нетривиально изгибаемый октаэдр относится к одному из трех типов. Октаэдры Брикара первых двух типов имеют по шесть пар равных ребер и в нашей классификации относятся к типу А. Третий тип Брикара соответствует нашему типу Б.

Класс октаэдров типа А более широк по сравнению с классом октаэдров Брикара типов 1 и 2. На самом деле все наши формулировки и доказательства можно было уточнить, указывая каждый раз, какие конкретно пары равных ребер имеют экваторы. Но был сделан сознательный выбор в пользу лаконичности классификации и изложения, так как повторение классификации Брикара именно в части "простых"\ случаев (типы 1 и 2) не являлось приоритетной задачей.

Будет вполне ожидаемо, если среди комплексов типа A найдутся те, что не реализуются как изгибаемые октаэдры. Дело, однако, в том, что априори даже среди комплексов брикаровских типов могут найтись не реализуемые в виде изгибамых октаэров. Ведь в статье Брикара (как и в нашей) найдены лишь {\it необходимые} условия изгибаемости. В дополнение к этому Брикар проводит несколько конкретных геометрических построений, показывающих, что найденные им классы непусты. Но о достаточности найденных условий речь не идет. Геометрические построения Брикара годятся и для наших типов A и Б, и в этой части мы воспользуемся результатами Брикара, сказав, что классы изгибаемых октаэдров типов A и Б непусты.

Вместе с тем найденные необходимые условия являются содержательными. В работе~\cite{as1} установлено, что многочлен Сабитова для объема произвольного октаэдра имеет вид
$$Q(V)=V^8+a_7(l)V^7+...+a_1(l)V+a_0(l),$$
где $V$ --- квадрат объема, а коэффициенты $a_k(l)$ зависят от квадратов длин ребер. В общем случае (когда все ребра обозначены разными буквами) многочлен содержит несколько сотен миллионов слагаемых; есть алгоритм вычисления коэффициентов $a_k(l)$ для конкретных ребер. Так, в случае октаэдров типа A получаются (см.~\cite{gms})  многочлены, у которых $a_7(l)\ne 0$, и, возможно, еще $a_6(l)\ne 0$, а коэффициенты при младших степенях $V$ равны нулю. Случай октаэдров типа Б в~\cite{gms} не рассматривается. С использованием же формул вида~(\ref{subs}) удается найти, что в этом случае
$$a_7(l)=a_6(l)=0.$$
Остальные коэффициенты многочлена пока не найдены в виду значительных вычислительных сложностей. Отметим, что исходя из известных ранее описаний изгибаемых октаэдров (включая описание самого Брикара) получить и этот результат было бы трудно.

Сформулируем теперь две нерешенные задачи, при решении которых скорее всего пригодится найденное новое метрическое описание изгибаемых октаэдров.

1. Уже упомянутая задача о нахождении достаточных условий существования изгибаемой реализации данного метрического комплекса. Проиллюстрируем нетривиальность задачи парой фактов: (а) даже для комплекса, соответствующего октаэдру Брикару 1-го типа, во многих случаях существует как изгибаемая его реализация, так и неизгибаемая реализация (в виде выпуклого многогранника); (б) при этом существуют изгибаемые октаэдры Брикара 1-го типа, не имеющие выпуклых реализаций. Некоторые достаточные условия реализуемости получены в~\cite{mih1}. Но до полного исследования этого вопроса еще далеко.

2. Было бы интересно найти {\it все} коэффициенты многочлена для объема октаэдра типа Б. Согласно гипотезе, высказанной И.Х.~Сабитовым, для любого октаэдра типа Б {\it все} коэффициенты $a_k(l)$ равны нулю, и, соответственно уравнение для объема должно принять вид $V^8=0$. Помимо того, что этот факт представляет и самостоятельный интерес, отсюда сразу следовало бы, в частности, что соответствующая развертка не может быть реализована в виде выпуклого октаэдра.

Работа выполнена с использованием оборудования Центра коллективного пользования сверхвысокопроизводительными вычислительными ресурсами МГУ имени М.В. Ломоносова.

Автор статьи выражает искреннюю благодарность И.Х.~Сабитову и Д.И.~Сабитову за ценные замечания и советы.
\end{fulltext}

\end{document}